\newcommand{\set}[1]{\,\left\{#1\right\}}
\newcommand{\setd}[2]{\,\left\{#1\ \colon\ #2\right\}}
\newtheorem{theorem}{Theorem}
\newtheorem{corollary}[theorem]{Corollary}
\newcommand{\RR}{\mathbb{R}}
\newcommand{\ZZ}{\mathbb{Z}}
\newcommand{\cO}{\mathcal O}
\title{Warped cones and spectral gaps}
\subjclass[2010]{46B85 (primary), 37C85, 37A30 (secondary)}
\author{Piotr W. Nowak}
\author{Damian Sawicki}
\address{Institute of Mathematics of the Polish Academy of Sciences, Warsaw, Poland; Institute of Mathematics, University of Warsaw, Poland}
\urladdr{\url{www.mimuw.edu.pl/~pnowak/}}
\address{Institute of Mathematics of the Polish Academy of Sciences, Warsaw, Poland}
\urladdr{\url{www.impan.pl/~dsawicki/}}
\date{22 April 2016}
\thanks{Both authors were partially supported by Narodowe Centrum Nauki grant DEC-2013/10/EST1/00352. The second author was partially supported by Fundacja na rzecz Nauki Polskiej grant MISTRZ 5/2012 of Prof. Tadeusz Januszkiewicz}
\begin{document}
\begin{abstract}
We show that warped cones over actions with spectral gaps  do not embed coarsely into large classes of Banach spaces.
In particular, there exist warped cones over actions of the free group that do not embed coarsely into $L_p$-spaces and there are 
warped cones over discrete group actions that do not embed into any Banach space with non-trivial type.
\end{abstract}

\maketitle

\section{Introduction}
The goal of this paper is to construct new examples of metric spaces that do not admit coarse embeddings into various classes of Banach spaces, in particular 
into Banach spaces with 
non-trivial type. 
The only examples with such properties known previously were expanders constructed from groups with strong Banach property (T) \cites{lafforgue-duke,liao}.
The spaces we construct are warped cones over certain actions with spectral gaps. 
Warped cones were introduced by Roe in \cite{roe-warped},
where it was shown that a warped cone over an action of a countable dense subgroup $\Gamma$ of a compact Lie group $G$ on $G$
does not embed coarsely into the 
Hilbert space unless $\Gamma$ is a-T-menable.

This result was extended to more general actions of non-a-T-menable groups on metric spaces in \cite{sawicki}. 
The existence of an invariant measure admitting a positive-measure subset on which the action is free guarantees non-embeddability of the warped cone.

Our proof of non-embed\-dability is different than the ones used in \cites{roe-warped,sawicki}. In particular, it does not use negative definite 
functions or kernels, which are restricted to the setting of Hilbert spaces. This allows to bypass the use of the Gelfand--Naimark--Segal construction and apply these techniques
to general Banach spaces. It also sidesteps the requirement that the action is free.

We also obtain additional information about the distortion of the level sets of the cone. Distortion measures the best
Lipschitz constant for an embedding of a compact, usually finite, metric space into another metric space. This notion has 
many applications in computer science, where the ability to embed finite metric spaces into the Hilbert space with
small distortion gives very useful information about the computational aspects of their metrics. It is a classical result of 
Bourgain that the Euclidean distortion of any finite metric space is at most logarithmic in its cardinality, up to a universal constant. This upper bound is realized by sequences of expanders. A quantitative version of our argument
shows that discretizations of level sets of a warped over an action with a spectral gap attain the logarithmic 
upper bound. 

One of the advantages of the arguments we use is that in many situations
they allow to use the spectral gap directly, without appealing to property (T). We show for instance that there are warped cones over a-T-menable groups that do not embed coarsely into the Hilbert space, or more generally, into any $L_p$-space, $1< p<\infty$.
An illustrative example  is the warped cone $\mathcal{O}_{\mathbb{F}_n}\!\operatorname{SU}(2)$.
Bourgain and Gamburd proved \cite{bourgain-gamburd} that for
appropriately chosen subgroups $\mathbb{F}_n$ in $\operatorname{SU}(2)$ the action of $\mathbb{F}_n$ on $\operatorname{SU}(2)$ has a spectral gap and 
it follows from the result presented here that the associated warped cone is not coarsely embeddable into any $L_p$-space, $1< p<\infty$.
Another example for which the same conclusion holds is the warped cone over the action of $\operatorname{SL}_2(\ZZ)$ on the torus.

We also obtain examples of warped cones that do not embed coarsely into more general classes of spaces.  Using Lafforgue's Banach strong property (T)
\cite{lafforgue-duke} we deduce that for an ergodic action of a lattice in $\operatorname{SL}_3(\mathbb{Q}_p)$ on a probability compact metric space the 
associated warped cone does not embed coarsely into any Banach space with non-trivial type. We also discuss other examples based on strong property (T)
for classes of Banach spaces satisfying certain type and cotype conditions, proved recently  for lattices in Lie groups \cite{delasalle, delasalle-delaat,liao} and for automorphism groups
of buildings \cite{oppenheim1,oppenheim2}.

The non-embeddability results presented here provide additional evidence supporting a conjecture made in \cite{drutu-nowak}
that warped cones over actions with spectral gaps do not satisfy the coarse Baum--Connes conjecture.

\subsection*{Acknowledgements} We would like to thank the referee for suggesting several improvements.

\section{Warped cones}

Let $(Y,d)$ be a compact metric space. Up to rescaling the metric we can assume $\operatorname{diam} Y\le 1$. Consider the (truncated) Euclidean cone $\operatorname{Cone}(Y)$ over $Y$, which can be identified as a set with $Y\times [1,\infty)$. 
The metric then satisfies 
$$d_{\operatorname{Cone}}((x,t),(y,t))=td(x,y).$$
Let $\Gamma$ be a finitely generated group acting on $Y$ by homeomorphisms.
The \emph{warped cone} $\mathcal{O}_\Gamma Y$ is the set $Y\times [1,\infty)$ with the largest metric $d_\Gamma$ satisfying
the conditions
\begin{enumerate}
\item $d_\Gamma(w,w')\le d_{\operatorname{Cone}}(w,w')$,
\item $d_\Gamma(w,\gamma w)\le \vert \gamma\vert$,
\end{enumerate}
for any $w,w'\in Y\times [1,\infty)$ and $\gamma\in \Gamma$ \cite{roe-warped}.

\section{Main result}

Let $\Gamma$ be generated by a finite set $S$. Suppose that $\Gamma$ acts on a compact metric probability space $(Y,d,m)$
by measure preserving homeomorphisms.
Let $E$ be a Banach space and let $L_p(Y,m;E)$ denote the associated Bochner space, where $1<p<\infty$. The exact value of $p$ in applications is chosen depending
on the context, for instance if $E$ is some $L_q$-space then we choose $p=q$.
Denote by $\pi$ the isometric representation of $\Gamma$ on $L_p(Y,m;E)$ induced by the action,
$$\pi_\gamma f(y)=f(\gamma^{-1}y),$$
for every $\gamma\in \Gamma$ and $f\colon Y\to E$.
Let $$Mf=\int_Y f\,dm$$
denote the mean value of $f$. In the case when the action of $\Gamma$ on $Y$ is ergodic, $M$ is an equivariant projection onto 
the subspace of invariant vectors. 
We will denote the kernel of this map $L_p^0(Y,m;E)$.

Recall that the action has a spectral gap if there is $\kappa>0$ such that for any $f\in L_p^0(Y,m;E)$ we have $$\sup_{s\in S}\|f-\pi_s f\| \geq \kappa \|f\|.$$
Since the $L_p$-norm is given by integration, it is straightforward to check that if the action on $L_p(Y,m;\RR)$ has a spectral gap, then for $E=L_p(\Omega,\nu)$ the action on $L_p(Y,m;E)$ also has a spectral gap.

Any ergodic action of a Kazhdan group has a spectral gap in $L_p$, see for instance \cite[Theorem A]{bfgm} for a more general fact. The same holds for a pair $(\Gamma, H)$ with relative property (T) if the action of $H$ is ergodic.

Let $X$, $Z$ be metric spaces. A map $f\colon X\to Z$ is a coarse embedding if there exist two nondecreasing and tending to infinity functions, $\rho_-,\rho_+:[0,\infty)\to [0,\infty)$ such that 
$$\rho_-(d_X(x,x'))\le d_Z(f(x),f(x')) \le \rho_+(d_X(x,x')),$$
for all $x,x'\in X$. Equivalently, we have $\lim_n d_Z(f(x_n),f(x_n'))=\infty$ if and only if $\lim_n d_X(x_n,x_n') = \infty$ for any sequences $(x_n)$ and $(x_n')$ in $X$. We refer to \cites{nowak-yu,roe-book} for details.

\begin{theorem}\label{main thm}
Let $\Gamma$ act by measure preserving homeomorphisms on a non-atomic probability metric space $(Y,d,m)$ and let $E$ be a Banach space.
If the representation $\pi$ of $\Gamma$ on $L_p(Y,m;E)$ has a spectral gap then the warped cone $\mathcal{O}_\Gamma Y$ does not admit a coarse 
embedding into $E$.
\end{theorem}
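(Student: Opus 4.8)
The plan is to argue by contradiction: assume $F\colon \mathcal{O}_\Gamma Y\to E$ is a coarse embedding with control functions $\rho_-\le\rho_+$, and extract from it, for every level $t\in[1,\infty)$, a \emph{slice} $F_t$ defined by $F_t(y)=F(y,t)$. We may assume $F$ is Borel and, restricting $E$ if necessary, separably valued; since $\diam Y\le 1$ forces $d_\Gamma((y,t),(y',t))\le t$, each slice is bounded, so $F_t\in L_p(Y,m;E)$. The engine of the proof is the interplay between two features of the warped metric. On the one hand, condition (2) defining $d_\Gamma$ gives $d_\Gamma((y,t),(sy,t))\le 1$ for every generator $s$, so that $\|F_t(y)-F_t(sy)\|_E\le\rho_+(1)$ pointwise; integrating and noting that $\pi_{s^{-1}}F_t(y)=F_t(sy)$ yields $\sup_{s\in S}\|F_t-\pi_sF_t\|\le\rho_+(1)=:C$ uniformly in $t$ (symmetrising $S$ if necessary). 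On the other hand, since the action is measure preserving, $M$ commutes with each $\pi_s$, so the mean-zero part $\bar F_t=F_t-MF_t\in L_p^0(Y,m;E)$ satisfies $\bar F_t-\pi_s\bar F_t=F_t-\pi_sF_t$.

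Feeding this into the spectral gap inequality gives $\kappa\|\bar F_t\|\le\sup_{s\in S}\|\bar F_t-\pi_s\bar F_t\|\le C$, whence $\|\bar F_t\|_{L_p}\le C/\kappa$ \emph{for all} $t$. Thus every slice is, in the $L_p$ sense, uniformly close to the constant $MF_t\in E$. Chebyshev's inequality then produces a threshold $R$ (e.g. $R=2^{1/p}C/\kappa$) for which the good set $G_t=\setd{y\in Y}{\|F_t(y)-MF_t\|_E\le R}$ has measure $m(G_t)\ge\tfrac12$, independently of $t$. For any two good points $y,y'\in G_t$ we get $\|F(y,t)-F(y',t)\|_E\le 2R$, and applying the lower control $\rho_-$ bounds the warped distance: $d_\Gamma((y,t),(y',t))\le D$, where $D=\sup\setd{s}{\rho_-(s)\le 2R}$ is finite and \emph{independent of} $t$.

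The contradiction comes from showing that, at high levels, the relation $d_\Gamma((y,t),(y',t))\le D$ is confined to a thin neighbourhood of the graphs of the finitely many homeomorphisms $\gamma$ with $|\gamma|\le D$. Concretely, I would prove the geometric lemma: for fixed $D$ there is $r_t\to 0$ such that $d_\Gamma((y,t),(y',t))\le D$ implies $d(y',\gamma y)\le r_t$ for some $\gamma$ with $|\gamma|\le D$. Granting this, $G_t\times G_t\subseteq\bigcup_{|\gamma|\le D}\setd{(y,y')}{d(y',\gamma y)\le r_t}$; since the action is measure preserving, each piece has measure $\int_Y m\big(B_d(y,r_t)\big)\,dm(y)$ (after the change of variable $y\mapsto\gamma^{-1}y$), and as $m$ is \emph{non-atomic} this tends to $0$ by dominated convergence, using $m(B_d(y,r_t))\to m(\set{y})=0$ pointwise and the bound $m(Y)=1$. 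With only finitely many admissible $\gamma$, the right-hand side has measure $o(1)$, contradicting $m(G_t\times G_t)\ge\tfrac14$ once $t$ is large enough.

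The main obstacle is the geometric lemma. A warped path of length $\le D$ joining two points at level $t$ splits into cone moves (of total cost $\le D$) and generator moves (a subword of length $\le D$ composing to some $\gamma$). Two observations tame it: the radial cost of descending from level $t$ to level $s$ is $|t-s|$, so a budget $D$ keeps the whole path at levels $\ge t-D$, forcing every cone move of cost $c$ to displace the $Y$-coordinate by at most $c/(t-D)$; and since $S$ is finite and $Y$ compact, the generators are uniformly equicontinuous, so the cumulative $Y$-displacement $\le D/(t-D)$, amplified through at most $D$ generators, still tends to $0$ as $t\to\infty$. Combining these gives $d(y',\gamma y)\le r_t\to 0$, which is precisely the input required above.
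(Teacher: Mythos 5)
Your proposal is correct, and its first half coincides exactly with the paper's: you subtract the mean, use $d_\Gamma((y,t),(sy,t))\le 1$ to get $\sup_{s\in S}\|F_t-\pi_sF_t\|\le\rho_+(1)$, and invoke the spectral gap to bound $\|F_t-MF_t\|$ by $\rho_+(1)/\kappa$ uniformly in $t$. The second half, where the contradiction is extracted, is genuinely different. The paper bounds the double integral $\iint\|f_t(x)-f_t(y)\|_E^p\,dm\,dm$ by $2^p\|f_t-Mf_t\|^p$, notes that by non-atomicity and Fubini the set of pairs lying in a common $\Gamma$-orbit is null, cites \cite{sawicki}*{Remark 3.1} for the qualitative fact that pairs in distinct orbits satisfy $d_\Gamma((x,t),(y,t))\to\infty$, and concludes (in effect by Fatou) that the integral blows up --- a soft, almost-everywhere argument. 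You instead run a concentration--covering argument: Chebyshev produces $G_t$ with $m(G_t)\ge\tfrac12$ on which all pairs have warped distance at most a fixed $D$, and your geometric lemma confines $G_t\times G_t$ to finitely many graph neighbourhoods $\setd{(y,y')}{d(y',\gamma y)\le r_t}$, $|\gamma|\le D$, whose total measure is $o(1)$ by measure preservation, non-atomicity and dominated convergence. Your lemma is a quantitative, self-contained version of the very statement the paper cites; it is essentially \cite{drutu-nowak}*{Lemma 7.2}, which the paper invokes only later, in the distortion corollary --- whose proof is structurally the same as your argument (the set $D_t(R)$ there plays the role of your graph neighbourhoods). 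So your route buys self-containedness and effectivity (it yields bounds at finite $t$, and the distortion corollary essentially falls out of it), while the paper's route buys brevity by outsourcing the geometry to a citation. Two small points: the lemma needs Roe's chain description of the warped metric \cite{roe-warped} to split a short warped path into cone moves and generator moves; and your phrase ``cumulative $Y$-displacement amplified through at most $D$ generators'' is imprecise, since displacements incurred at different stages are amplified by different suffixes of the word --- the correct bound is an iterated modulus-of-continuity estimate, but as the chain contains at most $D$ nontrivial generator moves, each cone displacement is $O\bigl(D/(t-D)\bigr)$, and the admissible words form a finite set independent of $t$, the conclusion $r_t\to 0$ stands.
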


\begin{proof} 
Let $f\colon \cO_\Gamma Y \to E$ be a coarse embedding. The restriction $f_t$ of $f$ to a section $Y\times \{t\}$ can be viewed as an element of $L_p(Y,m;E)$. 
Replacing $f_t$ by $f_t-Mf_t$ we can assume that it is an element of $L_p^0(Y,m;E)$.

Recall that for $y\in Y$ and $s\in S$, the warped distance $d_\Gamma((y,t),(sy,t))$ is at most $1$. Consequently,
\begin{align*}
 \|f_t-\pi_s f_t\|_{L_p^0(Y,m;E)}^p & = \int_Y \|f_t(y) - f_t(s^{-1}y)\|_E^p\,dm(y) \\ 
 & \leq \int_Y \rho_+(1)^p\,dm(y)\\
 & = \rho_+(1)^p.
 \end{align*}
The spectral gap guarantees that
\begin{equation}\label{equation : spectral gap}
\|f_t\|_{L_p^0(Y,m;E)} \leq \dfrac{\rho_+(1)}{\kappa}.
\end{equation}
On the other hand we have
\begin{align}\
\iint_{Y\times Y} \|f_t(x)-f_t(y)\|^p_E\, dm(x)\,dm(y) & \leq 2^{p} \int_Y \|f_t(x)\|^p_E\,dm(x) \label{equation: integral of f(x)-f(y)} \\ 
& = 2^{p} \|f_t\|_{L_p^0(Y,m;E)}^p.\nonumber
\end{align}
Thus it suffices to show that the first term expressed by a double integral is unbounded.

Since the measure is non-atomic, by the Fubini theorem the set $$N = \setd{(y,\gamma y)}{y\in Y,\, \gamma\in \Gamma}$$ is of measure $0$. By  \cite{sawicki}*{Remark 3.1} for any $x,y\in Y$ lying in different $\Gamma$-orbits the distance $d_\Gamma\left((x,t),(y,t)\right)$ goes to infinity with $t$. Since $f$ is a coarse embedding, also $\|f_t(x) - f_t(y)\|_E^p$ tends to infinity. 
Thus, the integrand of $\iint_{Y\times Y} \|f_t(x)-f_t(y)\|^p_E\, dm(x)\,dm(y)$ tends to infinity almost everywhere, so the integral itself goes to infinity, yielding the desired contradiction.
\end{proof}

Assume now that $Y$ is a compact metric space admitting a bi-Lipschitz embedding into a Banach space $E$ (this is automatically satisfied when $Y$ is finite).
Given such a bi-Lipschitz embedding $f\colon Y\to E$ define 
$$c_{E,f}(Y)=\sup_{x,y\in Y}\dfrac{\Vert f(x)-f(y)\Vert}{d(x,y)}\cdot \sup_{x,y\in Y} \dfrac{d(x,y)}{\Vert f(x)-f(y)\Vert}.$$
The $E$-distortion is then defined to be
$$c_E(Y)=\inf\setd{ c_{E,f}(Y) }{ f\colon Y\to E \text{ is bi-Lipschitz}}.$$
In particular, the distortion $c_E(Y)$ is infinite when $Y$ does not admit a bi-Lipschitz embedding into $E$.
For finite metric spaces the Euclidean distortion $c_2=c_{\ell_2}$  has been studied extensively and has many applications in the  
geometry of Banach spaces and theoretical computer science. A classical result of Bourgain \cite{bourgain} states that there exists a universal constant $C\ge 0$ such that
any finite metric space $Y$ satisfies $c_2(Y)\le C \log |Y|$. Bourgain also showed that this bound is tight and is achieved by expanders. Here we show that 
level sets of a warped cone also achieve this worst bound.

The additional assumptions in the next corollary are clearly satisfied for manifolds with a measure given by a volume form and smooth $\Gamma$-actions.
\begin{corollary}\label{C:distortion} 
If $\sup_y m(B_d(y,r)) = O(r^k)$ for some $k>0$, and $\Gamma$ acts on $Y$ by Lipschitz homeomorphisms, then $Y\times \{t\} \subseteq \mathcal{O}_\Gamma Y$ has $E$-distortion of order at least $\log t$.
\end{corollary}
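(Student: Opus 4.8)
The plan is to make the argument in the proof of Theorem~\ref{main thm} quantitative, replacing the coarse-embedding gauges $\rho_\pm$ by the Lipschitz constants of a bi-Lipschitz embedding of a single level set, and then to supply a geometric lower bound on the typical warped distance inside $Y\times\{t\}$. Throughout I keep the standing spectral-gap hypothesis of the theorem, with constant $\kappa$.

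First I would fix a bi-Lipschitz embedding $g\colon Y\times\{t\}\to E$ and normalise it so that $\|g(x,t)-g(y,t)\|\le d_\Gamma((x,t),(y,t))$ and $\|g(x,t)-g(y,t)\|\ge c^{-1}d_\Gamma((x,t),(y,t))$, where $c=c_{E,g}(Y\times\{t\})$ is its distortion. Setting $f_t=g(\cdot,t)-Mg(\cdot,t)\in L_p^0(Y,m;E)$, the bound $d_\Gamma((y,t),(s^{-1}y,t))\le 1$ gives $\|f_t-\pi_s f_t\|\le 1$ exactly as in the theorem, so the spectral gap yields $\|f_t\|\le 1/\kappa$ and hence $\iint_{Y\times Y}\|f_t(x)-f_t(y)\|^p\,dm\,dm\le (2/\kappa)^p$. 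Combining this with the lower Lipschitz bound $\iint_{Y\times Y}\|f_t(x)-f_t(y)\|^p\,dm\,dm\ge c^{-p}\iint_{Y\times Y}d_\Gamma((x,t),(y,t))^p\,dm\,dm$ and taking the infimum over $g$ reduces the corollary to the purely geometric estimate
$$\left(\iint_{Y\times Y} d_\Gamma((x,t),(y,t))^p\,dm(x)\,dm(y)\right)^{1/p}\gtrsim \log t,$$
since then $c_E(Y\times\{t\})\ge \tfrac{\kappa}{2}\big(\iint d_\Gamma^p\big)^{1/p}\gtrsim \log t$.

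To prove this estimate I would bound the measure of a warped ball on the level set. Fix $x$ and let $A_R(x)\subseteq Y$ be the $Y$-projection of $B_{d_\Gamma}((x,t),R)\cap(Y\times\{t\})$. Any $y\in A_R(x)$ is joined to $x$ by a warped path of length $\le R$; decomposing it into generator jumps (cost $1$ each) and cone segments, at most $R$ generators are used, and because the path must return to level $t$ it cannot descend below level $t-R/2\ge t/2$ once $R\le t$, so its cone segments displace the $Y$-coordinate by total $Y$-distance $\le 2R/t$. Let $\gamma$ be the word spelled by the generator jumps, $|\gamma|\le R$, and put $\Lambda=\max_{s\in S}\operatorname{Lip}(s^{\pm1})\ge 1$. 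A telescoping estimate comparing the true path to the generator-only skeleton shows that each horizontal displacement is stretched by at most $\Lambda^R$ under the subsequent generators, so $d(y,\gamma x)\le 2\Lambda^R R/t$. Hence $A_R(x)$ is covered by the balls $B_d(\gamma x,2\Lambda^R R/t)$ over the at most $(2|S|)^{R+1}$ words of length $\le R$, and the hypothesis $\sup_y m(B_d(y,r))=O(r^k)$ gives
$$m(A_R(x))\le (2|S|)^{R+1}\cdot O\!\left((\Lambda^R R/t)^k\right)=O\!\left(R^k\,(2|S|\,\Lambda^k)^{R}/t^k\right).$$
Choosing $R=c_0\log t$ with $c_0<k/\log(2|S|\,\Lambda^k)$ sends the right-hand side to $0$, so $m(A_R(x))\le \tfrac12$ for every $x$ once $t$ is large. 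By Fubini at least half of the pairs $(x,y)$ then satisfy $d_\Gamma((x,t),(y,t))>c_0\log t$, whence $\iint d_\Gamma^p\ge \tfrac12(c_0\log t)^p$ and the estimate follows.

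The main obstacle is this ball-measure bound, and within it the interaction between cone moves and the merely Lipschitz group action: a small horizontal displacement made early in a path is stretched by every subsequent generator, so the natural radius scale is $\Lambda^R R/t$ rather than $R/t$. It is precisely this exponential amplification, weighed against the exponential word growth $(2|S|)^R$ and the polynomial measure growth $r^k$, that pins the logarithmic threshold $R\asymp\log t$; making it rigorous—in particular ruling out that deep excursions into the cone could enlarge $A_R(x)$, which is where $R\le t$ and the return-to-level-$t$ constraint enter—is the crux of the argument.
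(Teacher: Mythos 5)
Your proposal is correct and takes essentially the same route as the paper: the spectral gap plus the bound $d_\Gamma((y,t),(sy,t))\le 1$ gives a uniform bound on $\iint\|f_t(x)-f_t(y)\|^p\,dm\,dm$, while covering warped balls in the level set by at most $|S|^{O(R)}$ group translates of metric balls of radius $\sim R\Lambda^R/t$ forces, at the scale $R=c\log t$, at least half of all pairs to be $R$-separated, and combining the two yields the $\log t$ lower bound. The ball-covering estimate you prove by hand is precisely what the paper imports as \cite{drutu-nowak}*{Lemma 7.2} (with $K=R\cdot L^R$), and your only other deviation --- integrating the lower Lipschitz bound to get $c\gtrsim\kappa\bigl(\iint d_\Gamma^p\bigr)^{1/p}$ instead of the paper's Chebyshev-type selection of a positive-measure set of pairs that are $d_\Gamma$-far but $E$-close --- is cosmetic.
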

\begin{proof}
By \cite{drutu-nowak}*{Lemma 7.2}, for every $R>0$ there exists $K>0$ such that the ball $B(w,R)$ in the warped cone satisfies
$$B(w,R)\subseteq \bigcup_{\vert \gamma\vert\le R} B_{\operatorname{Cone}}(\gamma w, K).$$
In fact, $K$ can be chosen as $R\cdot L^R$, where $L$ is the Lipschitz constant for the action of generators.

Hence,
\begin{align*}
m\left(B((y,t),R)\cap Y\times \{t\}\right) & \leq |B(1_\Gamma,R)| \cdot \sup_{x\in Y} m\left(B_d\left(x,\frac{R\cdot L^R}{t}\right)\right)\\
& \lesssim \vert S\vert^R \cdot \frac{R^k\cdot L^{kR}}{t^k}.
\end{align*}
If we put $R=c\log t$ for $c$ small enough, the right hand side is bounded by, say, one half (compare \cite{drutu-nowak}*{Lemma 7.3}). Denote 
$$D_t(R)=\setd{(x,y)\in Y\times Y}{d_\Gamma\left((x,t),(y,t)\right)\le R}.$$
From the Fubini theorem it follows that $m\times m(D_t(R))\le 1/2.$

Let $f_t\colon Y\times\set{t} \to E$ be a bi-Lipschitz embedding and let $L$ be its optimal Lipschitz constant. In particular, we have $\|f_t(x) - f_t(y)\| \leq L$ for $d_\Gamma((x,t),(y,t)) \leq 1$. Then, combining
\eqref{equation: integral of f(x)-f(y)} and
\eqref{equation : spectral gap}, we have
\begin{align*}
\iint_{Y\times Y\setminus D_t(K)} \|f_t(x)-f_t(y)\|^p_E \,dm(x)\,dm(y)&\leq \iint_{Y\times Y} \|f_t(x)-f_t(y)\|^p_E \,dm(x)\,dm(y)\\
&\leq \frac{2^{p}\cdot L^p}{\kappa^p}.
\end{align*}
It follows that there is a subset of positive measure of $Y\times Y\setminus D_t(K)$ on which $\|f_t(x)-f_t(y)\|_E\leq\frac{4L}{\kappa}$. Hence, the distortion is bounded below by
\begin{equation*}
L\cdot\frac{R}{\frac{4L}{\kappa}} = L\cdot\frac{c\cdot \log t}{\frac{4L}{\kappa}} = \frac{c\cdot\kappa}{4} \log t.\qedhere
\end{equation*}
\end{proof}

Recall that for $C>0$ a $C$-net in a metric space $(Y,d)$ is a subset $N\subseteq Y$ such that for any $y\in Y$ there exists $x\in N$ such that $d(x,y)\le C$.
If $Y^m$ is a manifold then there is a $1$-net $Z_t$ of cardinality of order $t^m$ in $Y\times\{t\}\subset \operatorname{Cone}(Y)$ and consequently in $Y\times\{t\}\subset \mathcal O_\Gamma Y$. One can check  that  the distortion of $Y\times\{t\}\subset \mathcal O_\Gamma Y$ and $Z_t\subset \mathcal O_\Gamma Y$ are of the same order. Thus, in the case of Corollary \ref{C:distortion}, we obtain the following.
\begin{corollary}
The distortion  $c_E(Z_t)$ is at least of order $\log |Z_t|$.

 In the case $E=\ell_2$ the Euclidean distortion $c_2(Z_t)$ realizes the upper bound.
\end{corollary}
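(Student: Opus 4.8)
The plan is to derive both assertions from Corollary~\ref{C:distortion} together with the comparison, recorded in the paragraph preceding the statement, between the distortion of the full section $Y\times\{t\}$ and that of the net $Z_t$; for the Euclidean case I would then invoke Bourgain's embedding theorem to supply the matching upper bound.

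First I would record the cardinality bookkeeping. Since $Y$ is an $m$-dimensional manifold and $Z_t$ is a $1$-net in the section $Y\times\{t\}\subset\operatorname{Cone}(Y)$ (hence also in $Y\times\{t\}\subset\mathcal{O}_\Gamma Y$, as $d_\Gamma\le d_{\operatorname{Cone}}$), its cardinality is of order $t^m$. Consequently $\log|Z_t| = m\log t + O(1)$, which is of the same order as $\log t$; this is the quantity against which the distortion is to be measured.

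For the lower bound I would apply Corollary~\ref{C:distortion}. Its hypotheses hold in the present situation, since a manifold equipped with a volume measure satisfies $\sup_y m(B_d(y,r))=O(r^m)$ and $\Gamma$ acts by Lipschitz homeomorphisms, so the corollary gives $c_E(Y\times\{t\})\gtrsim\log t$. Combining this with the fact that $c_E(Z_t)$ and $c_E(Y\times\{t\})$ are of the same order yields $c_E(Z_t)\gtrsim\log t\asymp\log|Z_t|$, which is the first claim. For the Euclidean statement I would supplement this with Bourgain's theorem~\cite{bourgain}: since $Z_t$ is finite there is a universal constant $C$ with $c_2(Z_t)\le C\log|Z_t|$, and together with the lower bound $c_2(Z_t)\gtrsim\log|Z_t|$ this gives $c_2(Z_t)\asymp\log|Z_t|$, so the Euclidean distortion attains the worst-case logarithmic upper bound.

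The main obstacle is the comparison step. One direction, $c_E(Z_t)\le c_E(Y\times\{t\})$, is immediate by restricting a bi-Lipschitz embedding to the subset $Z_t$; but the reverse estimate, which is the one needed for the lower bound, requires extending an embedding of the net to the whole section while increasing the distortion only by a bounded factor. Since every point of $Y\times\{t\}$ lies within warped distance at most $1$ of $Z_t$, the two metrics agree up to an additive error of at most $2$ on pairs, and the work is to confirm that at the relevant scale $R\asymp\log t$ this additive error is absorbed into a multiplicative constant. This is precisely the content of the ``one can check'' assertion preceding the corollary; the remaining steps are the routine bookkeeping above.
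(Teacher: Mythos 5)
Your overall architecture is exactly the paper's: the corollary is obtained by combining Corollary~\ref{C:distortion}, the estimate $|Z_t|\asymp t^m$ (hence $\log|Z_t|\asymp\log t$), the comparability of the distortion of the net $Z_t$ with that of the section $Y\times\{t\}$, and Bourgain's theorem for the matching upper bound when $E=\ell_2$. The paper, like you, treats the comparison as the only non-routine point and dismisses it with ``one can check'', so up to that point your proposal and the paper's reasoning coincide.

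The caveat concerns \emph{how} you propose to check that point, and here your plan contains a step that would fail. Extending an embedding $f\colon Z_t\to E$ to the whole section by nearest-point assignment, $\tilde f(y)=f(z(y))$ with $z(y)\in Z_t$ a nearest net point, can never produce a bi-Lipschitz map: $\tilde f$ collapses every pair of points sharing a nearest net point, so injectivity (and hence any lower Lipschitz bound) fails at scales $\le 2$, and no absorption of additive errors at the scale $R\asymp\log t$ can repair a failure occurring at scale $\le 2$. Worse, the inequality you would need, $c_E(Y\times\{t\})\lesssim c_E(Z_t)$, is not available in general: $Z_t$ is finite and so always embeds bi-Lipschitzly, whereas nothing guarantees that the level set with the warped metric $d_\Gamma$ admits \emph{any} bi-Lipschitz embedding into a given $E$, in which case the left-hand side is infinite while the right-hand side is finite. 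The repair is to bypass the \emph{statement} of Corollary~\ref{C:distortion} and re-run its \emph{proof} on the non-injective (but bounded and measurable) map $\tilde f$, since that proof never uses injectivity. Writing $L$ for the optimal Lipschitz constant of $f$ on $Z_t$, one has $\|\tilde f(x)-\tilde f(y)\|\le 3L$ whenever $d_\Gamma((x,t),(y,t))\le 1$, because the corresponding nearest net points are then at warped distance at most $3$. Feeding this into the spectral-gap estimate applied to $\tilde f-M\tilde f$ yields, exactly as in the proof of Corollary~\ref{C:distortion}, a positive-measure set of pairs with $d_\Gamma((x,t),(y,t))> R=c\log t$ but $\|\tilde f(x)-\tilde f(y)\|\lesssim L/\kappa$. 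For such a pair the net points satisfy $d_\Gamma((z(x),t),(z(y),t))\ge R-2$ while $\|f(z(x))-f(z(y))\|\lesssim L/\kappa$, so the distortion of $f$ itself is at least of order $\kappa R\asymp\log t\asymp\log|Z_t|$. With this substitution for your ``main obstacle'' paragraph, the remaining steps of your argument (the cardinality bookkeeping and the application of Bourgain's theorem in the case $E=\ell_2$) are correct as written.
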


\section{Examples}
There are plenty of examples to which the above theorem applies, giving rise to warped cones non-embeddable into 
various classes of Banach spaces.

\subsection*{\texorpdfstring{Warped cones non-embeddable into $\boldsymbol{L_p}$-spaces}{Warped cones non-embeddable into L\_p-spaces}}
Many examples of group actions with spectral gaps are known. For instance, it is shown in \cite{bourgain-gamburd} that certain finite subsets of elements in 
$\operatorname{SU}(2)$ generate a free subgroup, whose action on $\operatorname{SU}(2)$ has a spectral gap in $L_2(\operatorname{SU}(2))$. 
Another example of an action with a spectral gap is the action of $\operatorname{SL}_2(\ZZ)$ on the torus $\mathbb{T}^2=\RR^2/\ZZ^2$.

\begin{corollary} The warped cones $\mathcal{O}_{\mathbb{F}_n}\!\operatorname{SU}(2)$ and $\mathcal{O}_{\operatorname{SL}_2(\mathbb Z)}\mathbb{T}^2$ do not embed coarsely into the Hilbert space.
\end{corollary}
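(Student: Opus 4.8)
The plan is to obtain the corollary as a direct application of Theorem~\ref{main thm}, taking $p=2$ and $E=\ell_2$, where I realise the Hilbert space as $\ell_2=L_2(\NN)$ with the counting measure. For each of the two actions I would need to verify three points: that the base is a non-atomic probability metric space, that the group acts by measure preserving homeomorphisms, and---the only analytic content---that the induced representation $\pi$ has a spectral gap on the Bochner space $L_2(Y,m;\ell_2)$. Once these are in place, Theorem~\ref{main thm} rules out a coarse embedding of the corresponding warped cone into $\ell_2$.

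First I would record the measure-theoretic data together with the scalar spectral gaps, both of which are already at hand. For $\cO_{\mathbb{F}_n}\!\operatorname{SU}(2)$ I take $Y=\operatorname{SU}(2)$ with normalised Haar measure $m$, non-atomic because $\operatorname{SU}(2)$ is a positive-dimensional manifold; the free subgroup $\mathbb{F}_n$ acts by left translations, which are isometries preserving $m$, and Bourgain--Gamburd \cite{bourgain-gamburd} supply the spectral gap of the Koopman representation on $L_2(\operatorname{SU}(2),m;\RR)$. For $\cO_{\operatorname{SL}_2(\ZZ)}\mathbb{T}^2$ I take $Y=\mathbb{T}^2=\RR^2/\ZZ^2$ with Lebesgue measure, again non-atomic, on which $\operatorname{SL}_2(\ZZ)$ acts by measure preserving homeomorphisms; here the scalar gap on $L_2^0(\mathbb{T}^2)$ is classical, following from the relative property (T) of the pair $(\ZZ^2\rtimes\operatorname{SL}_2(\ZZ),\ZZ^2)$ through the Fourier decomposition $L_2^0(\mathbb{T}^2)=\bigoplus_{n\in\ZZ^2\setminus\{0\}}\CC e_n$, all of whose $\operatorname{SL}_2(\ZZ)$-orbits are infinite (the relevant principle is the one recalled before Theorem~\ref{main thm}, see \cite{bfgm}).

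Next I would promote each scalar gap to the vector-valued statement required by the theorem. Since $\ell_2=L_2(\NN)$, this is precisely the tensorisation observation recorded just before Theorem~\ref{main thm}, applied with $E=\ell_2$ and $p=2$. Explicitly, writing $f\in L_2^0(Y,m;\ell_2)$ in coordinates as $f=(f_j)_{j\in\NN}$ with each $f_j\in L_2^0(Y,m;\RR)$, I would use $\|f-\pi_s f\|^2=\sum_j\|f_j-\pi_s f_j\|^2$; averaging this over $s\in S$ and applying the scalar gap coordinatewise then yields $\sup_{s\in S}\|f-\pi_s f\|^2\ge |S|^{-1}\kappa^2\|f\|^2$, a spectral gap on $L_2(Y,m;\ell_2)$ (with a possibly smaller constant, which is all that is needed). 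With this gap established, Theorem~\ref{main thm} applies verbatim with $E=\ell_2$ to both actions and gives the conclusion. I do not expect a serious obstacle: these two warped cones are exactly the instances for which the general theorem was designed. The single genuinely hard ingredient---imported rather than proved---is the scalar spectral gap for the $\operatorname{SU}(2)$ action, which is the main result of \cite{bourgain-gamburd}; the torus case is classical, and both the passage to $\ell_2$-valued functions and the verification of non-atomicity and measure invariance are routine.
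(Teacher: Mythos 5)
Your proposal is correct and follows exactly the route the paper intends: the paper derives this corollary by combining Theorem~\ref{main thm} (with $p=2$, $E$ a Hilbert space) with the Bourgain--Gamburd spectral gap for $\mathbb{F}_n \curvearrowright \operatorname{SU}(2)$ and the classical relative property (T) gap for $\operatorname{SL}_2(\ZZ) \curvearrowright \mathbb{T}^2$, using the remark preceding the theorem that a scalar spectral gap in $L_2(Y,m;\RR)$ passes to $L_2(Y,m;E)$ for $E = L_2(\Omega,\nu)$ (here $E = \ell_2$). Your explicit coordinatewise verification of that tensorisation step, and of non-atomicity and measure invariance, simply fills in details the paper leaves as routine.
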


It is worth pointing out that $\operatorname{SU}(2)$ does not contain finitely generated subgroups with property (T). Furthermore, both $\mathbb F_n$ and $\operatorname{SL}_2(\mathbb Z)$ are a-T-menable.

The aforementioned result about $\operatorname{SU}(2)$ was generalized to $\operatorname{SU}(d)$ in \cite{bourgain-gamburd-d}. More generally, a recent result \cite{benoist-desaxce} shows that many discrete subgroups of compact simple Lie groups have a spectral gap for the action of the subgroup on
the ambient compact Lie group.

A spectral gap for the representation of $\Gamma$ on $L_2(Y,m)$ induced by an action of $\Gamma$ on $Y$ 
implies that the action of $\Gamma$ on $L_p(Y,m)$ has a spectral gap for any $1< p<\infty$.
Indeed, consider a Markov operator $A^\mu_\pi$ associated to an admissible probability measure $\mu$ and the representation induced by the action of $\Gamma$ on $Y$.
The spectral gap for the action on $L_p(Y,m)$ is equivalent to the fact that $\Vert A_\pi^\mu\Vert <1$ on $L_p^0(Y,m)$, see \cite{drutu-nowak}. However, under the assumption, 
this last condition
holds by interpolation  for all $p$ in $(1,2)$ and $(2,\infty)$.

\begin{corollary} The warped cones $\mathcal{O}_{\operatorname{SL}_2(\mathbb Z)}\mathbb{T}^2$ and $\mathcal{O}_{\Gamma}G$ do not embed coarsely into any $L_p$-space for any $1\le p<\infty$, provided that $\Gamma$ is an appropriate finitely generated subgroup of a Lie group $G$ as in \cites{bourgain-gamburd,bourgain-gamburd-d,benoist-desaxce}.
\end{corollary}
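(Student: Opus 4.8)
The plan is to feed a single input, a spectral gap of the induced representation on $L_2(Y,m;\RR)$, through the two stability observations recorded above and then into Theorem \ref{main thm}. For $\mathcal O_{\operatorname{SL}_2(\mathbb Z)}\mathbb T^2$ I take $Y=\mathbb T^2$ with Lebesgue measure and the linear $\operatorname{SL}_2(\mathbb Z)$-action, and for $\mathcal O_\Gamma G$ I take $Y=G$ with Haar measure and the $\Gamma$-action; in both cases \cites{bourgain-gamburd,bourgain-gamburd-d,benoist-desaxce} provide precisely the $L_2(Y,m;\RR)$-gap. In both cases the measure is non-atomic, and the action is ergodic (the gap itself rules out nonzero invariants in $L_2^0$) and by measure preserving homeomorphisms, so the standing hypotheses of Theorem \ref{main thm} hold.

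I would first treat a target space $E=L_p(\Omega,\nu)$ with $1<p<\infty$. The interpolation argument recalled above, namely that $\Vert A_\pi^\mu\Vert<1$ on $L_2^0(Y,m)$ forces $\Vert A_\pi^\mu\Vert<1$ on $L_p^0(Y,m)$ for every $p\in(1,2)\cup(2,\infty)$, upgrades the scalar $L_2$-gap to a scalar $L_p$-gap (the value $p=2$ being given outright). The Bochner-space observation then promotes this to a spectral gap of $\pi$ on $L_p(Y,m;L_p(\Omega,\nu))$, precisely because $E=L_p(\Omega,\nu)$. Applying Theorem \ref{main thm} with this $E$ shows that $\mathcal O_\Gamma Y$ does not embed coarsely into $L_p(\Omega,\nu)$, and as $(\Omega,\nu)$ is arbitrary this settles all $L_p$-spaces with $1<p<\infty$.

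The remaining case $p=1$ is the step I expect to require separate care, since both Theorem \ref{main thm} and the interpolation step are stated only for $1<p<\infty$. I would dispose of it by reduction to the already established non-embeddability into Hilbert space (the $p=2$ conclusion). As $\Vert x-y\Vert_1$ is a negative definite kernel on $L_1(\Omega,\nu)$, Schoenberg's theorem furnishes a map $T\colon L_1(\Omega,\nu)\to\HH$ into a Hilbert space with $\Vert Tx-Ty\Vert=\Vert x-y\Vert_1^{1/2}$; since $t\mapsto t^{1/2}$ is nondecreasing and tends to infinity, $T$ is a coarse embedding. Thus a hypothetical coarse embedding $\mathcal O_\Gamma Y\to L_1(\Omega,\nu)$ would compose with $T$ to give a coarse embedding into $\HH$, contradicting the $p=2$ conclusion. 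This completes the range $1\le p<\infty$.
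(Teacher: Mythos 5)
Your argument for $1<p<\infty$ is exactly the paper's: take the scalar $L_2(Y,m)$ spectral gap supplied by \cites{bourgain-gamburd,bourgain-gamburd-d,benoist-desaxce} (respectively, by relative property (T) for $\ZZ^2\rtimes\operatorname{SL}_2(\ZZ)$ in the torus case), upgrade it to a scalar $L_p$ gap by interpolating the Markov operator $A_\pi^\mu$, promote that to a gap on the Bochner space $L_p(Y,m;L_p(\Omega,\nu))$ using the paper's integration remark, and invoke Theorem \ref{main thm} with $E=L_p(\Omega,\nu)$; your verification of the standing hypotheses (non-atomicity of Lebesgue/Haar measure, measure-preserving homeomorphisms) is also correct, and your remark that the gap itself forces ergodicity is true, though ergodicity is not actually a hypothesis of Theorem \ref{main thm}. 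Where you genuinely go beyond the paper is the endpoint $p=1$: the corollary asserts non-embeddability for all $1\le p<\infty$, but the paper's supporting text (the interpolation paragraph) covers only $1<p<\infty$ and never addresses $p=1$, for which neither the interpolation step nor Theorem \ref{main thm} is stated. Your reduction is the standard and correct way to close this: since $\Vert x-y\Vert_1$ is a negative definite kernel on $L_1(\Omega,\nu)$, Schoenberg's theorem yields $T\colon L_1(\Omega,\nu)\to\HH$ with $\Vert Tx-Ty\Vert=\Vert x-y\Vert_1^{1/2}$, which is a coarse embedding, and a composition of coarse embeddings is coarse (compose the control functions, which remain nondecreasing and unbounded); hence a coarse embedding of $\cO_\Gamma Y$ into an $L_1$-space would contradict the already-established Hilbert space case. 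So your proposal matches the paper's route where the paper gives one, and supplies a correct argument for the endpoint case that the paper's exposition leaves implicit.
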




\subsection*{Warped cones non-embeddable into certain classes of Banach spaces}
Consider all spaces with type $p$ and cotype $q$ satisfying 
\begin{equation}\label{equation: type-cotype condition}
\dfrac{1}{p}-\dfrac{1}{q}\le \dfrac{1}{r}.
\end{equation}
In particular, in \cite{delasalle} de la Salle showed that $\operatorname{SL}_3(\mathbb{R})$ 
has Lafforgue's strong property (T) \cite{lafforgue-duke} for all spaces satisfying  \eqref{equation: type-cotype condition} with $r=4$.
Subsequently, in \cite{delasalle-delaat} jointly with de Laat they showed that 
connected higher-rank simple Lie groups have strong property (T) with respect to the class of spaces satisfying 
\eqref{equation: type-cotype condition} with $r=10$.
Oppenheim \cite{oppenheim1} showed recently that Steinberg groups have spectral gaps for isometric representations on
 spaces in the class  satisfying \eqref{equation: type-cotype condition} with $r=4$.
He proved a similar result for automorphism groups of thick buildings and Banach spaces in the class of spaces satisfying \eqref{equation: type-cotype condition} with $r=20$ \cite{oppenheim2}.

Since $L_2(Y,m; E)$ has type and cotype equal, respectively, to the type and cotype of $E$ (see e.g. \cite[Theorem 11.12]{jarchow-etal}), in all of the above cases the warped cone associated to any non-atomic ergodic action of $G$ does not embed coarsely into any Banach space in the corresponding class listed above.

\subsection*{Warped cones non-embeddable into spaces with non-trivial type}

Consider now a finitely generated group $\Gamma$ with strong Banach property (T) \cite{lafforgue-duke}, that is, property (T) with respect to all Banach spaces of non-trivial type. Examples of such groups include
lattices in $\operatorname{SL}_3(\mathbb{Q}_p)$, as shown by Lafforgue \cite{lafforgue-duke}. This class was extended 
in \cite{liao} to 
 connected almost $F$-simple algebraic groups whose $F$-split rank is at least 2, where $F$ denotes a non-Archimedean local field. 
Let $Y$ be any non-atomic probability space on which $\Gamma$ acts ergodically.
\begin{corollary}
The warped cone $\mathcal{O}_\Gamma Y$ for $\Gamma$ and $Y$ as above does not embed coarsely into any Banach space with non-trivial type.
\end{corollary}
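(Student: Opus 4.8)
The plan is to deduce this from the Main Theorem (Theorem~\ref{main thm}) by manufacturing a spectral gap for $\pi$ on $L_p(Y,m;E)$ directly out of the strong Banach property (T) of $\Gamma$. Concretely, I would take $p=2$ and put $F = L_2(Y,m;E)$. The first step is to check that $F$, and more importantly the relevant subspace of $F$, lies in the class of Banach spaces for which $\Gamma$ has property (T). As recalled in the preceding subsection, $L_2(Y,m;E)$ has the same type as $E$, so $F$ has non-trivial type; and since $L_2^0(Y,m;E)$ is a closed $\Gamma$-invariant subspace of $F$, it inherits non-trivial type as well (type passes to subspaces).

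The second step is to locate the invariant vectors. Because the action is ergodic, $M$ is the equivariant projection onto the invariant vectors, and these are exactly the constants; hence the restriction of $\pi$ to $L_2^0(Y,m;E) = \ker M$ is an isometric representation of $\Gamma$, on a Banach space of non-trivial type, with no non-zero invariant vectors. (The restriction of each $\pi_\gamma$ to the invariant subspace $L_2^0$ is again a surjective isometry, so the subrepresentation is genuinely isometric.)

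The third step is to invoke strong Banach property (T). Applied to this isometric representation on a space of non-trivial type, it guarantees the absence of almost invariant vectors, which for our fixed finite generating set $S$ is precisely the assertion that there is some $\kappa>0$ with $\sup_{s\in S}\|f-\pi_s f\| \geq \kappa\|f\|$ for every $f\in L_2^0(Y,m;E)$. This is exactly the spectral gap hypothesis of Theorem~\ref{main thm}, so applying that theorem with the given $E$ and $p=2$ shows that $\mathcal{O}_\Gamma Y$ admits no coarse embedding into $E$, as claimed.

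The step I expect to be the main obstacle is the third one: one must make sure that strong Banach property (T), which is phrased for isometric representations on the whole class of non-trivial-type spaces, can be legitimately applied to the subrepresentation on $L_2^0(Y,m;E)$ and that it delivers the uniform constant $\kappa$ in exactly the form required by the definition of spectral gap used here. This rests on two points that should be made explicit: that $L_2^0(Y,m;E)$ really belongs to the class (verified in the first step), and that property (T) relative to a class of Banach spaces yields, for each fixed generating set and each isometric representation without invariant vectors, a uniform lower bound on $\sup_{s\in S}\|f-\pi_s f\|$. Both are standard, and by comparison the verification that passing to $E$-valued $L_2$-functions and then to the mean-zero subspace preserves non-trivial type is essentially routine.
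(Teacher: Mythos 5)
Your proposal is correct and follows essentially the same route the paper intends (the corollary is stated without a separate proof precisely because this argument is implicit in the surrounding text): use the fact that $L_2(Y,m;E)$ inherits the type of $E$, note that ergodicity makes the invariant vectors the constants so that $L_2^0(Y,m;E)=\ker M$ carries an isometric representation without invariant vectors, invoke strong Banach property (T) to get the spectral gap constant $\kappa$, and then apply Theorem~\ref{main thm}. Your care about the two fine points in the third step (that $L_2^0$ stays in the class, and that property (T) for a class yields a per-representation constant $\kappa$ for the fixed generating set) is exactly the right bookkeeping and is consistent with the paper's use of these results.
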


\begin{bibdiv}
\begin{biblist}

\bib{bfgm}{article}{
   author={Bader, U.},
   author={Furman, A.},
   author={Gelander, T.},
   author={Monod, N.},
   title={Property (T) and rigidity for actions on Banach spaces},
   journal={Acta Math.},
   volume={198},
   date={2007},
   number={1},
   pages={57--105},
}

\bib{benoist-desaxce}{article}{
	author={Benoist, Y.},
	author={de Saxce, N.},
	title={A spectral gap theorem in simple Lie groups},
	journal={arXiv:1405.1808 [math.RT]},
}

\bib{bourgain}{article}{
   author={Bourgain, J.},
   title={On Lipschitz embedding of finite metric spaces in Hilbert space},
   journal={Israel J. Math.},
   volume={52},
   date={1985},
   number={1-2},
   pages={46--52},
}

\bib{bourgain-gamburd}{article}{
   author={Bourgain, J.},
   author={Gamburd, A.},
   title={On the spectral gap for finitely-generated subgroups of $\rm
   \operatorname{SU}(2)$},
   journal={Invent. Math.},
   volume={171},
   date={2008},
   number={1},
   pages={83--121},
}

\bib{bourgain-gamburd-d}{article}{
   author={Bourgain, J.},
   author={Gamburd, A.},
   title={A spectral gap theorem in ${\rm SU}(d)$},
   journal={J. Eur. Math. Soc. (JEMS)},
   volume={14},
   date={2012},
   number={5},
   pages={1455--1511},
}

\bib{delasalle}{article}{
	author={de la Salle, M.},
	title={Towards Strong Banach property (T) for SL(3,R)},
	journal={arXiv:1307.2475 [math.GR]},
}

\bib{delasalle-delaat}{article}{
   author={de Laat, T.},
   author={de la Salle, M.},
   title={Strong property (T) for higher-rank simple Lie groups},
   journal={Proc. Lond. Math. Soc. (3)},
   volume={111},
   date={2015},
   number={4},
   pages={936--966},
}

\bib{drutu-nowak}{article}{
   author={Dru\c{t}u, C.},	
   author={Nowak, P. W.},
   title={Kazhdan projections, random walks and ergodic theorems},
   journal={arXiv:1501.03473 [math.GR]},
}

\bib{jarchow-etal}{book}{
   author={Diestel, J.},
   author={Jarchow, H.},
   author={Tonge, A.},
   title={Absolutely summing operators},
   series={Cambridge Studies in Advanced Mathematics},
   volume={43},
   publisher={Cambridge University Press, Cambridge},
   date={1995},
   pages={xvi+474},

}

\bib{lafforgue-duke}{article}{
   author={Lafforgue, V.},
   title={Un renforcement de la propri\'et\'e (T)},
   journal={Duke Math. J.},
   volume={143},
   date={2008},
   number={3},
   pages={559--602},
}

\bib{liao}{article}{
   author={Liao, B.},
   title={Strong Banach property (T) for simple algebraic groups of higher
   rank},
   journal={J. Topol. Anal.},
   volume={6},
   date={2014},
   number={1},
   pages={75--105},
}

\bib{nowak-yu}{book}{
   author={Nowak, P. W.},
   author={Yu, G.},
   title={Large scale geometry},
   series={EMS Textbooks in Mathematics},
   publisher={European Mathematical Society (EMS), Z\"urich},
   date={2012},
   pages={xiv+189},
}

\bib{oppenheim1}{article}{
	author={Oppenheim, I.},
	title={Averaged projections, angles between groups and strengthening of property (T)},
	journal={arXiv:1507.08695 [math.GR]},
}

\bib{oppenheim2}{article}{
	author={Oppenheim, I.},
	title={Vanishing of cohomology with coefficients in representations on Banach spaces of groups acting on Buildings},
	journal={arXiv:1512.08188 [math.GR]},
}

\bib{roe-book}{book}{
   author={Roe, J.},
   title={Lectures on coarse geometry},
   series={University Lecture Series},
   volume={31},
   publisher={American Mathematical Society, Providence, RI},
   date={2003},
   pages={viii+175},
}

\bib{roe-warped}{article}{
   author={Roe, J.},
   title={Warped cones and property A},
   journal={Geom. Topol.},
   volume={9},
   date={2005},
   pages={163--178},
}

\bib{sawicki}{article}{
    author={Sawicki, D.},
    title={Warped cones over profinite completions},
    journal={arXiv:1509.04669v1 [math.MG]}
}

\end{biblist}
\end{bibdiv}

\end{document}